\setlist{itemsep=3pt}
\newtheorem{prop}{Proposition}
\newtheorem{theo}[prop]{Theorem}
\newtheorem{lemm}[prop]{Lemma}
\theoremstyle{definition}
\newtheorem{defi}[prop]{Definition}
\numberwithin{prop}{section}
\numberwithin{equation}{section}
\newcommand{\CC}{\mathbf{C}}
\newcommand{\RR}{\mathbf{R}}
\renewcommand{\SS}{\mathbf{S}}
\let\oldmarginpar\marginpar
\renewcommand\marginpar[1]{\-\oldmarginpar[\raggedleft\footnotesize #1]%
{\raggedright\footnotesize #1}}
\DeclareMathOperator{\dist}{dist}
\newcommand{\eps}{\varepsilon}
\begin{document}

\title[Phase transitions, Minimal Hypersurfaces, Rigidity]{Double-well phase transitions are more rigid than minimal hypersurfaces}

\author{Christos Mantoulidis}
\address{Department of Mathematics, Rice University, Houston, TX 77005}
\email{christos.mantoulidis@rice.edu}

\maketitle

\begin{abstract}
	In this short note we see that double-well phase transitions exhibit more rigidity than their minimal hypersurface counterparts. 
\end{abstract}

\section{Introduction}

Fix a Riemannian manifold $(N, g_N)$, which for simplicity we assume to be closed. There is a rich and long history of parallels between the theories of:
\begin{enumerate}
	\item minimal hypersurfaces, which are hypersurfaces that are critical points of the induced area functional
		\[ A : \{ \text{closed hypersurfaces of $N$} \} \to (0, \infty) \]
		\[ A(\Sigma) := \text{Area}_{g_N}(\Sigma), \]
		and
	\item double-well phase transitions (per the van der Waals–Cahn–Hilliard theory), which are critical points of the functional
		\[ E_\eps : C^\infty(N; [-1, 1]) \to [0, \infty) \]
		\[ E_\eps(u) := \int_N \big( \tfrac12 \eps |\nabla_{g_N} u|^2 + \eps^{-1} W(u) \big) \, d\mu_{g_N}, \]
		for some phase transition width parameter $\eps > 0$ and some double-well potential $W \in C^\infty(\RR; \RR)$. (See Definition \ref{defi:double.well}.)
\end{enumerate}
The geometric analysis research community has shown a lot of interest in understanding how and when one can go from one theory to the other. In one direction, given double-well phase transitions $u = u_i\in C^\infty(N; [-1, 1])$ with $\eps = \eps_i \to 0$, one would like to argue (if possible) that the nodal sets $\{ u_i = 0 \}$ converge, e.g., in the Hausdorff sense, to a minimal hypersurface $\Sigma$ in $(N, g_N)$. We refer the interested reader to \cite{HutchinsonTonegawa, Tonegawa, TonegawaWickramasekera, Guaraco, GasparGuaraco:ac.closed}. In the opposite direction, given a minimal hypersurface $\Sigma$ in $(N, g_N)$, one would like to produce (if possible) double-well phase transitions $u = u_i\in C^\infty(N; [-1, 1])$ with $\eps = \eps_i \to 0$ whose nodal sets $\{ u_i = 0 \}$ converge to $\Sigma$, e.g., in the Hausdorff sense.  
We refer the interested reader to \cite{Modica, Sternberg, KohnSternberg, PacardRitore, delPinoKowalczykWei:clustering, delPinoKowalczykPacardWei:multiple.ends, delPinoKowalczykWeiYang:interface, delPinoKowalczykWei:r3}. 

In this note we show that not all minimal hypersurfaces are limits of nodal sets of double-well phase transitions. In particular, we show that in the presence ambient symmetry ($\SS^1$ symmetry, in our case) double-well phase transitions exhibit additional rigidity properties that minimal hypersurfaces don't have:

\begin{theo} \label{theo:main}
	Suppose $(M, g_M)$ is a closed Riemannian manifold and $(N, g_N) := \SS^1 \times (M, g_M)$ is its Riemannian product with a unit circle. Let $u_i : N \to [-1, 1]$ be double-well phase transitions for a potential $W$ as in Definition \ref{defi:double.well} and parameters $\eps_i \searrow 0$ and
	\[ \lim_{i \to \infty} \{ u_i = 0 \} = \{ \theta_1, \ldots, \theta_m \} \times M =: \Sigma \text{ in the Hausdorff sense}, \]
	where $\theta_1, \ldots, \theta_m \in \SS^1$ are distinct. Then:
	\begin{enumerate}
		\item[(a)] $\lim_{i \to \infty} u_i = \pm 1$ locally uniformly on $N \setminus \Sigma$, with the $\pm$ sign necessarily alternating across each component of $\Sigma$ (thus, $m$ is even). 
		\item[(b)] $\Sigma$ is invariant under a rotation by $\tfrac{2\pi}{m}$ radians along the $\SS^1$ fiber. 
	\end{enumerate}
\end{theo}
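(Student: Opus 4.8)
The plan is to exploit a conservation law forced by the $\SS^1$-symmetry. Write $\SS^1 = \RR/2\pi\ZZ$ with angular coordinate $\theta$, so that $\partial_\theta$ is a parallel --- hence Killing --- unit field on $N = \SS^1\times M$. Any critical point $u$ of $E_\eps$ solves $\eps^2\Delta_{g_N}u = W'(u)$, and its stress--energy tensor $T := \big(\tfrac{\eps}{2}|\nabla_{g_N}u|^2 + \tfrac1\eps W(u)\big)\,g_N - \eps\,du\otimes du$ is symmetric and divergence-free; contracting $T$ with $\partial_\theta$ gives a divergence-free $1$-form, and the divergence theorem on the sub-slabs $(\theta_a,\theta_b)\times M$ then shows that
\[ h(\theta)\;:=\;\int_{\{\theta\}\times M}\Big(\tfrac{\eps}{2}|\nabla_M u|^2 - \tfrac{\eps}{2}(\partial_\theta u)^2 + \tfrac1\eps W(u)\Big)\,d\mu_{g_M} \]
is independent of $\theta$ (here $\nabla_M u$ is the component of $\nabla_{g_N}u$ tangent to the $M$-fibers) --- the product-manifold version of Gui's Hamiltonian identity for the Allen--Cahn equation. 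I would apply this to each $u_i$ and call the constant $h_i$.

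For part (a), the local uniform convergence $u_i\to\pm1$ on $N\setminus\Sigma$ is of the standard kind: the Hausdorff convergence $\{u_i=0\}\to\Sigma$ makes $u_i$ eventually sign-definite on every compact subset of a slab $S_j := (\theta_j,\theta_{j+1})\times M$, and one pins $|u_i|$ to $1$ there by sliding against $u_i$ the one-dimensional comparison functions $(\theta,p)\mapsto\bH\big((\theta-a)/\eps_i\big)$, which --- because $\SS^1\times M$ is a metric product --- are \emph{exact} solutions of the $\eps_i$-Allen--Cahn equation on every $\theta$-strip of $N$; here $\bH$ is the heteroclinic, $\bH'' = W'(\bH)$, $\bH(\pm\infty)=\pm1$. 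Interior elliptic estimates then make $\partial_\theta u_i$, $\nabla_M u_i$ and $W(u_i)$ exponentially small on slab interiors, so evaluating the constant $h_i$ at any fixed slab midpoint forces $h_i\to0$.

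The crux is to evaluate $h_i$ to leading order. Fix $i$ large; let $\theta^\star$ be a $\theta$-value halfway between two consecutive nodal sheets of $u_i$ and $w^{(i)}$ the $\theta$-distance between those sheets, where a priori there may be more than $m$ such ``cells'', each of one of the two signs $\pm1$, if some limit multiplicity exceeds $1$ (distinct nodal sheets are separated by distances $\gg\eps_i$, by a blow-up, since $W$ admits no homoclinic orbit). On such a cell $u_i = \sigma + \phi$ with $\sigma = \pm1$, where $\phi$ is to leading order the \emph{symmetric} superposition of the two exponential tails $\sim 2A\,e^{-\mu\,\dist(\cdot,\,\text{sheet})/\eps_i}$ of the bounding transitions, $\mu := \sqrt{W''(1)}>0$ and $2A$ the universal decay coefficient in $\bH(s) = 1 - 2A\,e^{-\mu s} + o(e^{-\mu s})$. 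By symmetry $\partial_\theta\phi$ vanishes at $\theta^\star$, and the transitions are asymptotically flat along $M$ because their Hausdorff limit $\{\theta_j\}\times M$ is totally geodesic, so the potential term dominates in $h(\theta^\star)$, yielding
\[ h_i \;=\; c_0\,\eps_i^{-1}\,e^{-\mu\, w^{(i)}/\eps_i}\,\big(\Vol_{g_M}(M) + o(1)\big),\qquad c_0 := 8A^2\,W''(1) > 0, \]
with the $o(1)$ uniform over cells. Establishing this --- controlling $u_i$ in $C^1$ near the nodal sheets by the superposed-heteroclinic ansatz with a relative error that is genuinely $o(1)$, a matched-asymptotics / Lyapunov--Schmidt estimate --- is the step I expect to be the main obstacle; what follows is soft.

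Since the left-hand side $h_i$ does not depend on the cell, the displayed formula forces $\big(w_j^{(i)} - w_k^{(i)}\big)/\eps_i\to0$, hence $w_j^{(i)} - w_k^{(i)}\to0$, over all cells $j,k$. Cells of asymptotically equal width can neither contract to a point nor be infinitely numerous (else the sheets would equidistribute and become dense in $\SS^1$, contradicting $\Sigma\subsetneq N$), so along a subsequence there are exactly $m$ of them, every limit multiplicity is $1$, and $w_j^{(i)}\to 2\pi/m$ for each $j$. Multiplicity one means $\lim u_i$ flips sign across each $\{\theta_j\}\times M$ --- the alternation in (a), with $m$ even since the sign must return to itself after one loop around $\SS^1$ --- while $w_j^{(i)}\to2\pi/m$ says $\theta_{j+1}-\theta_j = 2\pi/m$ for all $j$, i.e.\ $\Sigma$ is invariant under the rotation $\theta\mapsto\theta+\tfrac{2\pi}{m}$ of the $\SS^1$-fiber, which is (b).
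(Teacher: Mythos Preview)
Your route via the conserved Hamiltonian is genuinely different from the paper's. The paper never computes any asymptotics of $u_i$; it argues entirely by sliding barriers and the strong maximum principle. One first lifts to $\RR\times M$, then builds a one-dimensional model solution on a short interval (an $E_\eps$-minimizer with zero boundary data, Lemma~\ref{lemm:model}), extends it by odd reflections and trivially across $M$, and uses the comparison lemma of Guaraco--Marques--Neves (Lemma~\ref{lemm:gmn}) to trap $u_i$ strictly below this barrier on a suitable slab. Sliding the barrier along the $\RR$-factor until first contact then contradicts the maximum principle. One such slide rules out a non-alternating sign at some $\theta_j$ (giving (a)), and a second rules out two adjacent gaps of unequal length (giving (b)). The only quantitative input is the Hausdorff closeness of $\{u_i=0\}$ to $\Sigma$; nothing about the fine shape of $u_i$ is used.

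Your conservation law $h(\theta)=\text{const}$ is correct and is morally exactly what encodes equal spacing, but the step you flag is a real gap rather than a technicality. To obtain $h_i = c_0\,\eps_i^{-1}e^{-\mu w^{(i)}/\eps_i}\big(\Vol_{g_M}(M)+o(1)\big)$ with the $o(1)$ uniform over cells, you need $u_i$ to agree in $C^1$ on each cell with a superposed one-dimensional heteroclinic profile up to a \emph{relative} error that beats the exponential tail itself. From the theorem's hypotheses---only Hausdorff convergence of $\{u_i=0\}$, with no energy or Morse index bound---this is not available for free: the claim that $\nabla_M u_i$ is negligible at the midpoint ``because the limit sheet is totally geodesic'' is heuristic (Hausdorff convergence of zero sets does not by itself control tangential gradients to that precision), and even the decomposition of $\{u_i=0\}$ into finitely many graphical ``sheets'' with well-defined $\theta$-widths is not immediate. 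The paper's sliding argument sidesteps all of this, needing only the sign of $u_i$ and the location of its zero set; what you gain from the Hamiltonian approach is a clean conceptual explanation and, in principle, sharper quantitative information, but at the cost of a matched-asymptotics estimate that is substantially harder than anything in the paper's proof.
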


\begin{defi} \label{defi:double.well}
	A function $W \in C^\infty(\RR; \RR)$ is a double-well potential if:
	\begin{enumerate}
		\item $W(x) \geq 0$ for all $x$, with equality if and only if $x = \pm 1$.
		\item $W(x) = W(-x)$ for all $x$.
		\item $W''(\pm 1) > 0$.
		\item $W'(x) / x$ is increasing for $x \in (0, 1)$ and decreasing for $x \in (-1, 0)$.
	\end{enumerate}
\end{defi}

Theorem \ref{theo:main} has various interesting consequences:
\begin{enumerate}
	\item There exist many degenerate minimal hypersurfaces that cannot be limits of double-well phase transitions. Indeed, $\Sigma := \{ \theta_1, \ldots, \theta_{m} \} \times M$ is totally geodesic---and therefore minimal---in $N$ for any distinct $\theta_1, \ldots, \theta_{m} \in \SS^1$, but according to Theorem \ref{theo:main} it can only occur as a limiting phase transition when $m$ is even and $\Sigma$ is $\tfrac{2\pi}{m}$-rotationally symmetric along the $\SS^1$ fiber. Thus, the nondegeneracy assumption in Pacard--Ritor\'e \cite{PacardRitore} (see also Pacard \cite{Pacard}, De Philippis--Pigati \cite{DePhilippisPigati}) cannot be weakened unconditionally. There is a result in the degenerate case due to Caju--Gaspar \cite{CajuGaspar} who assume ``strong'' integrability for their degenerate minimal hypersurfaces, i.e., that all Jacobi fields must be produced from ambient isometries. By Theorem \ref{theo:main}, this assumption cannot be weakened to the standard integrability assumption, i.e., that all Jacobi fields arise from nearby minimal hypersurfaces. Indeed, each $\Sigma$ above carries Jacobi fields that equal $a_k \partial_t$ (here, $t$ is the $\SS^1$ coordinate and $a_k \in \RR$) on each $\{ \theta_k \} \times M$, $k = 1, \ldots, m$, all of which are obviously integrable, but only ``strongly'' integrable if $a_1 = \ldots = a_m$.
	\item The double-well phase transition set of min-max varifolds, which is contained in the corresponding Almgren--Pitts set (Gaspar--Guaraco \cite{GasparGuaraco:ac.closed}, see also Dey \cite{Dey:ac.ap}), can be a proper subset. Indeed, any $\{ \theta_1, \theta_2 \} \times \SS^1$, $\theta_1, \theta_2 \in \SS^1$, can be realized as an Almgren--Pitts width on $\SS^1 \times \SS^1$, but only those with $\theta_1 = - \theta_2$ will be realized as a phase transition width.
\end{enumerate}

We view this rigidity as consistent with the stronger results one has when studying double-well phase transitions versus minimal hypersurfaces. For instance:
\begin{enumerate}
	\item The author and Otis Chodosh proved in \cite{ChodoshMantoulidis:ac.3d} the multiplicity-one property of all min-max minimal hypersurfaces constructed using double-well phase transition regularization in generic closed Riemannian 3-manifolds. Subsequently, Xin Zhou \cite{Zhou:multiplicity.one} proved the multiplicity-one conclusion in dimension up to $7$ for at least one min-max hypersurface in Almgren--Pitts theory using a different regularization, but which is not characterized by the same rigidity as, e.g., Theorem \ref{theo:main}.
	\item The author and Otis Chodosh proved in \cite{ChodoshMantoulidis:pwidths} that in all closed Riemannian 2-manifolds, all phase transition min-max stationary geodesic networks give unions of closed geodesics. There is no Almgren--Pitts analog yet, but see Calabi--Cao \cite{CalabiCao}, Aiex \cite{Aiex}.
\end{enumerate}

\section{Two simpler results}

The proposition below was discussed by the author with Alessandro Pigati in a meeting in January 2022 as rudimentary evidence for the rigidity of double-well phase transitions as opposed to minimal hypersurfaces. It can be thought of as a stronger version of Theorem \ref{theo:main} when $M$ is $0$-dimensional.

\begin{prop} \label{prop:n.1}
	Fix $\eps > 0$, and a double-well phase transition $u : \SS^1 \to [-1, 1]$ with parameter $\eps$ and $m$ nodal intervals. Then:
	\begin{enumerate}
		\item[(a)] The nodal intervals are all congruent.
		\item[(b)] On each nodal interval, $u$ equals $\pm 1$ times a positive function that vanishes on its boundary and does not change from each nodal interval to the next. The $\pm$ sign alternates across each nodal point.
	\end{enumerate}
\end{prop}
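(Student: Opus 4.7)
The plan is to use phase-plane analysis for the Euler--Lagrange ODE on $\SS^1$. Critical points of $E_\eps$ satisfy the autonomous equation $\eps^2 u'' = W'(u)$, which admits the first integral
\[ H(u, u') := \tfrac12 \eps^2 (u')^2 - W(u) \equiv C \]
for some constant $C$. First I would argue that a nodal point cannot be a double zero of $u$: otherwise ODE uniqueness would give $u \equiv 0$, contradicting the existence of nodal intervals. Hence $u$ crosses zero transversally at each nodal point, so the sign of $u$ alternates across nodal points.

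Next I would extract two invariants from the conservation law. Evaluating $H \equiv C$ at a nodal point gives $|u'| = \eps^{-1}\sqrt{2(C + W(0))}$, independent of which nodal point is chosen. Evaluating $H \equiv C$ at an interior extremum $u_k^*$ of $u|_{I_k}$ gives $W(u_k^*) = -C$. Condition (4) of Definition \ref{defi:double.well}, combined with evenness of $W$ and the fact that $W$ vanishes only at $\pm 1$, forces $W$ to be strictly monotone on each of $(0,1)$ and $(-1,0)$; hence $W(x) = -C$ has exactly two solutions $\pm x^*$ in $[-1, 1]$, with $x^* \in (0,1)$, so $u_k^* = \pm x^*$ with sign matching that of $u$ on $I_k$.

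Finally, I would define $\phi \colon [0, L] \to [0, x^*]$ as the unique solution of $\eps^2 \phi'' = W'(\phi)$ with $\phi(0) = 0$ and $\phi'(0) = +\eps^{-1}\sqrt{2(C+W(0))}$, where $L > 0$ is its first return to zero; existence of $L$ and positivity of $\phi$ on $(0, L)$ follow because the level set $\{H = C\}$ is a closed curve around the origin in the phase plane. On any nodal interval $[a_k, a_{k+1}]$ on which $u > 0$, the sign of $u$ to the right of $a_k$ forces $u'(a_k) = +\eps^{-1}\sqrt{2(C+W(0))}$, so ODE uniqueness yields $u(a_k + \cdot) \equiv \phi$ and $a_{k+1} - a_k = L$. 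The $u \mapsto -u$ symmetry of the ODE (by evenness of $W$) gives the analogous statement with $-\phi$ on any negative nodal interval. This simultaneously proves (a) and (b). The only subtle point is uniqueness of $x^*$ given $C$, which is precisely what Definition \ref{defi:double.well}(4) provides; everything else is routine phase-plane bookkeeping.
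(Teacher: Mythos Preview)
Your argument is correct, but it takes a different route from the paper's. The paper exploits the reflection symmetries of the autonomous ODE directly: since $W'$ is odd, ODE uniqueness forces $u$ to be odd about every nodal point; since the equation is autonomous, $u$ is even about every critical point. Oddness about the left endpoint of a nodal interval combined with evenness about the first interior critical point immediately pins that critical point at the midpoint and makes the next nodal interval the mirror image (with opposite sign) of the current one. This gives (a) and (b) in three sentences, with no need for the first integral, the monotonicity of $W$ on $(0,1)$, or any phase-plane geometry.

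Your phase-plane approach reaches the same conclusion via the invariant $|u'|=\eps^{-1}\sqrt{2(C+W(0))}$ at nodal points plus ODE uniqueness from those initial data. Two remarks: first, your ``only subtle point'' (uniqueness of $x^*$) is in fact not needed --- once you know $u'(a_k)$ has a fixed sign-dependent value at every nodal point, ODE uniqueness alone forces $u(a_k+\cdot)\equiv \pm\phi$, and the peak value and interval length come for free; second, the existence of $L$ does not require the closed-level-set discussion either, since any given positive nodal interval of $u$ already exhibits $\phi$ returning to zero. So your proof can be pruned to essentially: same $|u'|$ at nodal points $\Rightarrow$ same IVP up to sign $\Rightarrow$ done. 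The paper's symmetry argument buys brevity and avoids invoking condition~(4) of Definition~\ref{defi:double.well} altogether; your approach makes the conserved quantity explicit, which is more portable to settings where the clean reflection symmetries are absent.
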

\begin{proof}
	By the existence and uniqueness of second order ODE solutions with given an initial position and velocity, $u$ must be odd about each vanishing point as well as even about each critical point, and thus even about the midpoint of the nodal interval. This implies that all intervals in $\{ u > 0 \}$ and $\{ u < 0 \}$ are congruent and that $u$ is $\pm 1$ times a positive function that does not change from one interval to the next.
\end{proof}

Functions as in Proposition \ref{prop:n.1} will not exist if $m$ is odd, or if $\eps > 0$ is large. Conversely, when $m$ is even and $\eps > 0$ is sufficiently small, functions of the desired form can be produced by a reflection argument from the following lemma. 

\begin{lemm}[Model solutions] \label{lemm:model}
	Fix $\ell > 0$. For sufficiently small $\eps > 0$, depending on $\ell$, there is a double-well phase transition $u : [-\ell, \ell] \to [0, 1]$ with parameter $\eps$ so that $u(\pm \ell) = 0$ and $u > 0$ on $(-\ell, \ell)$.
\end{lemm}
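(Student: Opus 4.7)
The plan is to produce an even solution $u$ of the Euler--Lagrange equation $\eps^2 u'' = W'(u)$ prescribed by the initial data $u(0) = \alpha$ and $u'(0) = 0$ for some $\alpha \in (0, 1)$ to be chosen. (Evenness is automatic: by ODE uniqueness, $v(x) := u(-x)$ satisfies the same initial value problem, so $v = u$.) The equation admits the first integral
\[ \tfrac{1}{2} \eps^2 (u')^2 = W(u) - W(\alpha), \]
so the ``time'' taken for $u$ to decrease monotonically from $\alpha$ down to its first zero equals
\[ L(\alpha, \eps) = \eps \, I(\alpha), \qquad I(\alpha) := \int_0^\alpha \frac{du}{\sqrt{2(W(u) - W(\alpha))}}. \]
Everything reduces to finding $\alpha = \alpha(\eps) \in (0, 1)$ with $\eps \, I(\alpha) = \ell$; the restriction of $u$ to $[-\ell, \ell]$ will then satisfy $u(\pm\ell)=0$ and $u>0$ on $(-\ell,\ell)$ by construction.

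\textbf{Analysis of $I(\alpha)$.} Property (4) of Definition \ref{defi:double.well}, together with $W'(1) = 0$ (which follows since $W$ attains its minimum at $1$), forces $W'(x)/x < 0$ on $(0, 1)$; hence $W$ is strictly decreasing on $[0, 1]$, the integrand of $I(\alpha)$ is positive on $[0, \alpha)$, and it has an integrable $(\alpha - u)^{-1/2}$ singularity at $u = \alpha$ since $W'(\alpha) < 0$. A routine dominated-convergence argument after the reparametrization $u = \alpha v$ gives continuity of $I$ on $(0, 1)$. To see that $I(\alpha) \to \infty$ as $\alpha \to 1^-$, property (3) yields the Taylor expansion $W(u) \sim \tfrac{1}{2} W''(1)(1 - u)^2$ near $u = 1$; substituting $v := 1 - u$ shows the portion of $I(\alpha)$ contributed by $u$ close to $1$ is asymptotic to
\[ \frac{1}{\sqrt{W''(1)}} \int_{1 - \alpha}^{\delta} \frac{dv}{\sqrt{v^2 - (1 - \alpha)^2}} \sim \frac{\log \tfrac{1}{1 - \alpha}}{\sqrt{W''(1)}} \to \infty. \]

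\textbf{Conclusion and main obstacle.} Fix any $\alpha_0 \in (0, 1)$ and set $\eps_0 := \ell / I(\alpha_0)$. For every $\eps \in (0, \eps_0]$, we have $\ell / \eps \geq I(\alpha_0)$, so by continuity of $I$ and its divergence at $\alpha = 1^-$, the intermediate value theorem produces $\alpha^* \in [\alpha_0, 1)$ with $I(\alpha^*) = \ell/\eps$; solving the ODE with initial data $(u, u')(0) = (\alpha^*, 0)$ and restricting to $[-\ell, \ell]$ gives the desired $u$. The main technical care lies in the bookkeeping for the singular integrand of $I(\alpha)$ at both endpoints, especially in extracting the logarithmic divergence at $u = 1$ uniformly enough to conclude $I(\alpha) \to \infty$; the rest is standard ODE theory.
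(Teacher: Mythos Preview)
Your argument is correct and complete, but it takes a genuinely different route from the paper's. The paper proceeds variationally: it minimizes $E_\eps$ over $W^{1,2}_0(-\ell,\ell)$, replaces the minimizer by $\min\{|u|,1\}$ (which does not increase the energy, by the evenness of $W$ and the fact that $1$ is a global minimum of $W$), and then rules out $u\equiv 0$ by an energy comparison that only works once $\eps$ is small. Positivity on $(-\ell,\ell)$ then follows from ODE uniqueness at an interior zero. Your approach is instead a direct shooting argument for the ODE $\eps^2 u'' = W'(u)$: you exploit the first integral to reduce the problem to solving $\eps\, I(\alpha)=\ell$, and then use continuity of $I$ together with its logarithmic blow-up as $\alpha\to 1^-$ (driven by $W''(1)>0$) to invoke the intermediate value theorem. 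The variational proof is shorter and, as a byproduct, produces a \emph{minimizer}; your construction is more explicit, makes the evenness of $u$ and the dependence of $\alpha$ on $\eps$ transparent, and avoids any appeal to the direct method. Both yield critical points of $E_\eps$, which is all that the later barrier arguments (via Lemma~\ref{lemm:gmn}) require.
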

\begin{proof}
	First, one minimizes $E_\eps$ on $W^{1,2}_0(-\ell, \ell)$. Using the evenness of $W$ and that $1$ is a global minimum of $W$, one may replace $u$ by $\min \{ |u|, 1 \}$ to get that the minimizer satisfies $0 \leq u \leq 1$ a.e., and thus everywhere by elliptic regularity.  Note that $u > 0$ on $(-\ell, \ell)$ unless $u \equiv 0$, e.g., by the uniqueness of second order ODE solutions with given initial position and velocity. The case $u \equiv 0$ is ruled out for minimizers by an energy comparison argument for small $\eps > 0$. 
\end{proof}

These model solutions are used as barriers in the proof of Theorem \ref{theo:main}. 

Below, we prove the $m \leq 2$ case of Theorem \ref{theo:main} as a warm-up; the general case will require a localization of the simpler argument below.

\begin{prop} \label{prop:m.2}
	Suppose $M$, $N$ are as in Theorem \ref{theo:main} and let $u_i : N \to [-1,1]$ be double-well phase transitions with parameters $\eps_i \searrow 0$ so that, for $\theta_1, \theta_2 \in \SS^1$ (not necessarily distinct),
	\[ \lim_{ i \to \infty} \{ u_i = 0 \} = \{ \theta_1, \theta_2 \} \times M =: \Sigma \text{ in the Hausdorff sense}. \]
	Then:
	\begin{enumerate}
		\item[(a)] $\theta_1$ and $\theta_2$ are distinct and, in fact, $\theta_2 = - \theta_1$. 
		\item[(b)] $\lim_{i \to \infty} u_i = \pm 1$ locally uniformly on $N \setminus \Sigma$, with alternating signs across each component of $\Sigma$.
	\end{enumerate}
\end{prop}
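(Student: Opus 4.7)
My approach divides the proposition into three stages, ultimately reducing to a one-dimensional ODE analysis in the $\SS^1$ direction.

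First, by standard phase-transition convergence theory (Hutchinson--Tonegawa, Tonegawa--Wickramasekera), along a subsequence $u_i \to u_\infty$ locally uniformly on $N \setminus \Sigma$, with $u_\infty \in \{-1,+1\}$ locally constant on each connected component. This gives part (b) except for the sign-alternation assertion and the distinctness of $\theta_1, \theta_2$.

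Next, to rule out $\theta_1 = \theta_2$ and to establish sign alternation across each component of $\Sigma$, I would blow up $u_i$ at a point $p = (\theta_k, x_0) \in \Sigma$ at scale $\eps_i$ in the $\theta$ direction, leaving $M$ unscaled. This produces a limit $v : \RR \times M \to [-1,1]$ solving $v_{ss} + \Delta_M v = W'(v)$ with $v(0, x_0) = 0$ and $v(s, \cdot) \to u_\infty^{\pm}$ as $s \to \pm\infty$, where $u_\infty^{\pm}$ denote the one-sided limits across the slice containing $p$. A Modica/Liouville-type argument on $\RR \times M$ forces $\nabla_M v \equiv 0$, so $v$ satisfies the 1D ODE $v'' = W'(v)$. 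Conservation of energy $\tfrac12 v'^2 - W(v) \equiv -W(u_\infty^{\pm}) = 0$ then confines $v$ to the heteroclinic orbit joining $-1$ and $+1$, which forces $u_\infty^+ \neq u_\infty^-$ across every component of $\Sigma$. Since $\SS^1 \setminus \{\theta^*\}$ is connected, this excludes the single-slice case $\theta_1 = \theta_2$.

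Finally, with $\theta_1 \neq \theta_2$, the two arcs of $\SS^1 \setminus \{\theta_1, \theta_2\}$ have half-widths $\ell_1, \ell_2$ with $\ell_1 + \ell_2 = \pi$; WLOG $u_\infty = +1$ on the short-arc strip $S_1$ and $-1$ on the long-arc strip $S_2$. Condition~(4) in Definition~\ref{defi:double.well}---that $W'(x)/x$ is increasing on $(0,1)$---is precisely the Br\'ezis--Oswald hypothesis for uniqueness of positive solutions to $-\Delta u = -\eps_i^{-2} W'(u)$ with Dirichlet zero data on any bounded domain. Applying it to the positive region $\Omega_i^+$ of $u_i$ (which Hausdorff-approaches $S_1$), I identify $u_i$ there with the $M$-product extension $\bar U_{\ell_1}(\theta, x) := U_{\ell_1}(\theta - \tfrac{\theta_1 + \theta_2}{2})$ of Lemma~\ref{lemm:model}'s model solution, and analogously on $S_2$ with sign reversed. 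Hence $u_i$ is $M$-independent on each strip. Smoothness of $u_i$ at $\{\theta_1, \theta_2\} \times M$ then imposes the $C^1$-matching $|U'_{\ell_1}(\ell_1)| = |U'_{\ell_2}(\ell_2)|$, which combined with the 1D conservation $\tfrac12 U'_\ell(\ell)^2 = \eps_i^{-2}(W(0) - W(U_\ell(0)))$ yields $W(U_{\ell_1}(0)) = W(U_{\ell_2}(0))$, and hence $\ell_1 = \ell_2$ by monotonicity of $W$ on $(0,1)$ and of $\ell \mapsto U_\ell(0)$.

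The main obstacle is the identification of $u_i$ with the 1D-extended model on each sign region in the final stage, because a priori $\Omega_i^+$ is only Hausdorff-close to the product $S_1 = (\theta_1, \theta_2) \times M$ rather than literally of that form, so Br\'ezis--Oswald does not directly apply with the product model as competitor. Overcoming this likely requires either a sliding/moving-plane argument using Br\'ezis--Oswald uniqueness on reflected positions to propagate the $M$-symmetry back to $u_i$, or the use of $U_\ell$ with $\ell$ slightly less than $\ell_1$ as a strict sub-barrier combined with a limiting argument that exploits the Hausdorff convergence $\Omega_i^+ \to S_1$.
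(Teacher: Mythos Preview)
Your Stage~3 contains the real gap, and it is more than a technicality. You aim to identify $u_i$ on its positive region with the product-extended model solution via Br\'ezis--Oswald uniqueness, and then use one-dimensional energy conservation plus $C^1$-matching at the interface to force $\ell_1 = \ell_2$. But this would require each $u_i$ to be \emph{exactly} $M$-independent for large $i$, which the proposition does not assert and which need not hold: nothing rules out genuinely $M$-dependent transitions whose nodal sets nonetheless Hausdorff-converge to antipodal slices. Since $\Omega_i^+$ is not a product domain, Br\'ezis--Oswald with the model as competitor does not apply, and your proposed fixes (sliding, sub-barriers) are left unspecified. A smaller issue: under your anisotropic rescaling in Stage~2 the limit equation is $v_{ss} = W'(v)$, with no $\Delta_M v$ term, so the Liouville step is moot---but you would still need to argue the blow-up limit is the heteroclinic rather than a bounded periodic orbit, which requires control of the other sheets of the nodal set at scale $\eps_i$.

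The paper's proof avoids all of this by using the one-dimensional model only as a \emph{barrier}, never as an identification. For (a), if $\theta_1 \neq -\theta_2$ one rotates so both angles lie in an open half-circle $\Gamma_+$ and, up to sign, $\{u_i \geq 0\} \subset \Gamma_+ \times M$ for large $i$. The model solution of Lemma~\ref{lemm:model} with $\ell = \tfrac{\pi}{2}$, oddly reflected over $\SS^1$ and extended trivially across $M$, yields a comparison function $\bar u_i$ with $u_i < \bar u_i$ everywhere by Lemma~\ref{lemm:gmn} (the one-sided Br\'ezis--Oswald comparison, applied on the \emph{barrier's} positive set, which \emph{is} a product). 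Rotating $\bar u_i$ along $\SS^1$ until first contact then violates the strong maximum principle. Part (b) runs the same scheme with $\ell = \tfrac{\pi}{4}$ and three odd reflections, after Lemma~\ref{lemm:gui} gives $|u_i| \to 1$ off $\Sigma$. No blow-up, no Liouville, no attempt to prove $u_i$ is one-dimensional.
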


\begin{proof}[Proof of Proposition \ref{prop:m.2}]
	We first prove (a). Suppose, for the sake of contradiction, that $\theta_1 \neq - \theta_2$. By the Hausdorff convergence $\{ u_i = 0 \} \to \{ \theta_1, \theta_2 \} \times M$, one can rotate $\SS^1$ to guarantee that
	\[ \theta_1, \theta_2 \in \Gamma_+:= \{ z \in \SS^1 \subset \CC : \operatorname{Re} z > 0 \} \]
	and, after possibly replacing $u_i$'s with $-u_i$'s, that for all large $i$,
	\begin{equation} \label{eq:m.2.containment}
		\{ u_i \geq 0 \} \subset \Gamma_+ \times M.
	\end{equation}
	Passing to larger $i$ if necessary, let
	\[ \bar u_i : N ( = \SS^1 \times M) \to [-1, 1] \]
	be obtained by applying Lemma \ref{lemm:model} with $\ell = \tfrac12 \pi$ and $\eps = \eps_i$, defining it on $\Gamma_+$ using the arclength parametrization $t \mapsto e^{it}$, extending to $\SS^1$ with an odd reflection, and extending trivially across $M$.
	
	By Lemma \ref{lemm:gmn}, \eqref{eq:m.2.containment} implies
	\[ u_i < \bar u_i \text{ on } \{ u_i \geq 0 \} \cup ((\SS^1 \setminus \Gamma_+) \times M). \]
	This trivially extends to
	\begin{equation} \label{eq:m.2.full.inequality}
		u_i < \bar u_i \text{ on } N,
	\end{equation}
	since $u_i < 0 < \bar u_i$ in the complementary portion of $N$. Inequality \eqref{eq:m.2.full.inequality} leads to a contradiction as we may rotate $\bar u$ until it eventually touches $u_i$, violating the maximum principle. Therefore, $\theta_1 = -\theta_2$, yielding (a).
	
	In what follows, we may assume that $\theta_1 = 1 \in \SS^1 \subset \CC$, $\theta_2 = -1 \in \SS^1 \subset \CC$.
		
	We now prove (b). By Lemma \ref{lemm:gui}, $\lim_{i \to \infty} |u_i| = 1$ on $N \setminus \Sigma$. Let us suppose, for the sake of contradiction, that
	\begin{equation} \label{eq:m.2.sign}
		\lim_{i \to \infty} u_i = -1 \text{ on } N \setminus \Sigma.
	\end{equation}
	The other case is ruled out by replacing $u_i$'s with $-u_i$'s.
	
	Passing to larger $i$ if necessary, let
	\[ \breve u_i : N ( = \SS^1 \times M) \to [-1, 1] \]
	be obtained by applying Lemma \ref{lemm:model} with $\ell = \tfrac14 \pi$ and $\eps = \eps_i$ and using the arclength parametrization $t \mapsto e^{it}$, extending to $\SS^1$ with three odd reflections, and extending trivially across $M$. By repeating the argument above, \eqref{eq:m.2.sign} implies
	\[
		u_i < \breve u_i \text{ on } N,
	\]
	for large $i$. This leads to a contradiction similarly as before, yielding (b).
\end{proof}

\section{Proof of Theorem \ref{theo:main}}

In what follows, we assume $m > 2$, otherwise Proposition \ref{prop:m.2} yields the result. The proof is, in spirit, a unique continuation-type proof like the one-dimensional proof in Proposition \ref{prop:n.1}, but it relies on the maximum principle similarly to the proof of Proposition \ref{prop:m.2}, which must now be localized.

We lift our solutions and $\Sigma$ to the covering space
\[ \RR \times M \to N ( = \SS^1 \times M) \]
where, without loss of generality, we may suppose that the angles $\theta_1, \ldots, \theta_{m}$ lift to real numbers $0 < \theta_1 < \ldots < \theta_{m} < 2\pi$. We will still denote  our solutions by $u_i$.

We first prove (a). Recall that $\lim_{i \to \infty} |u_i| = 1$ away from $\Sigma$, by Lemma \ref{lemm:gui}. We may suppose, for the sake of contradiction, that 
\begin{equation} \label{eq:m.limit.neg1}
	\lim_{i \to \infty} u_i = -1 \text{ near (not necessarily on) } \{ \theta_2 \} \times M.
\end{equation}
The other cases are ruled out by replacing $u_i$'s with $-u_i$'s.

Let $\delta \in (0, \tfrac{1}{6} \min \{ \theta_2 - \theta_1, \theta_3 - \theta_2 \})$. It follows from the Hausdorff convergence assumption on our nodal sets that, for all large $i$, 
\begin{equation} \label{eq:m.hausdorff.dist}
	\dist_{\operatorname{Haus}}(\{ u_i = 0 \}, \Sigma) < \delta
\end{equation}
and from \eqref{eq:m.limit.neg1}, \eqref{eq:m.hausdorff.dist} it follows that
\begin{equation} \label{eq:m.containment.geq}
	\Omega_i := \{ u_i \geq 0 \} \cap [\theta_1 + \delta, \theta_3 - \delta] \times M \subset (\theta_2 - \delta, \theta_2 + \delta) \times M
\end{equation}
Passing to larger $i$ if necessary, let 
\[ \breve u_i : [\theta_2 - 3 \delta, \theta_2 + 3 \delta] \times M \to [-1, 1] \]
be obtained by applying Lemma \ref{lemm:model} with $\ell = \delta$, $\eps = \eps_i$, translating to the right by $\theta_2$, extending by an odd reflection to the left and the right, and finally extending trivially along $M$. It follows from \eqref{eq:m.containment.geq} and Lemma \ref{lemm:gmn} that
\[ u_i < \breve u_i \text{ on } \{ u_i \geq 0 \} \cup ([\theta_2 - 3\delta, \theta_2 - \delta] \cup [\theta_2 + \delta, \theta_2 + 3\delta]) \times M. \]
Since $u_i < 0 < \breve u_i$ in the remainder of $\operatorname{domain}(\breve u_i)$, this implies
\[ u_i < \breve u_i \text{ on } \operatorname{domain}(\breve u_i). \]
We can now contradict the strong maximum principle by sliding $\breve u_i$ to the left along the $\RR$ factor: due to \eqref{eq:m.hausdorff.dist}, the first touching point has to occur before sliding by  $2\delta$ and cannot occur on the boundary of (the slid version of) $\operatorname{domain}(\breve u_i)$. This yields (a). Note that, in particular, this improves $m > 2$ to $m > 3$.

It remains to prove (b).  We may suppose that
\begin{equation} \label{eq:m.pm.1}
	\lim_{i \to \infty} u_i = -1 \text{ on } (\theta_2, \theta_3) \times M,
\end{equation}
\begin{equation} \label{eq:m.right.longer}
	\theta_4 - \theta_3 > \theta_3 - \theta_2.
\end{equation}
The remaining cases can be ruled out by replacing $u_i$'s with $-u_i$'s and possibly reflecting across the $\RR$ axis. Note also that by \eqref{eq:m.pm.1} and (a), we also have
\begin{equation} \label{eq:m.pm.1.b}
	\lim_{i \to \infty} u_i = 1 \text{ on } ((\theta_1, \theta_2) \cup (\theta_3, \theta_4)) \times M.
\end{equation}

Fix $\delta \in (0, \tfrac14 \min \{ \theta_2 - \theta_1, \theta_3 - \theta_2, \theta_4 - \theta_3, (\theta_4 - \theta_3) - (\theta_3 - \theta_2)\})$, as in possible by \eqref{eq:m.right.longer}. It follows from the Hausdorff convergence assumption on our nodal sets that, for all large $i$, 
\begin{equation} \label{eq:m.hausdorff.dist.2}
	\dist_{\operatorname{Haus}}(\{ u_i = 0 \}, \Sigma) < \delta.
\end{equation}

It follows from \eqref{eq:m.pm.1}, \eqref{eq:m.pm.1.b}, and \eqref{eq:m.hausdorff.dist.2} that for large $i$,
\begin{equation} \label{eq:m.containment.leq}
	\{ u_i \leq 0 \} \cap [\theta_1 + \delta, \theta_4 - \delta] \times M \subset (\theta_2 - \delta, \theta_3 + \delta) \times M.
\end{equation}
Passing to larger $i$ if necessary, let 
\[ \bar u_i : [\theta_2 - \delta, \theta_3 + \delta + (\theta_3 - \theta_2 + 2\delta)] \times M \to [-1, 1] \]
be obtained by applying Lemma \ref{lemm:model} with $\ell = \theta_3 - \theta_2 + 2\delta $, $\eps = \eps_i$, translating to the right by $\theta_3 + \delta$, extending by an odd reflection to the left, and finally extending trivially along $M$.

It follows from \eqref{eq:m.containment.leq} and Lemma \ref{lemm:gmn} that
\[ \bar u_i < u_i \text{ on } \{ u_i \leq 0 \} \cup [\theta_3 + \delta, \theta_3 + \delta + (\theta_3 - \theta_2 + 2\delta)] \times M \]
Since $\bar u_i < 0 < u_i$ in the remainder of $\operatorname{domain}(\bar u_i)$, this implies
\[ \bar u_i < u_i \text{ on } \operatorname{domain}(\bar u_i). \]
We can now contradict the strong maximum principle by sliding $\bar u_i$ to the left along the $\RR$ factor: due to \eqref{eq:m.hausdorff.dist.2}, the first touching point has to occur before sliding by  $2\delta$ and cannot occur on the boundary of (the slid version of) $\operatorname{domain}(\bar u_i)$. This yields (a).

\appendix

\section{The main tools}

\begin{lemm}[{\cite[Proposition 2.2]{Gui}}] \label{lemm:gui}
	Suppose $u$ is a double-well phase transition with parameter $\eps > 0$ on a closed Riemannian manifold. There exist constants $C > 0$, $\kappa > 0$, depending only on the ambient manifold and the double-well potential $W$, such that
	\[ |u^2 - 1| \leq C e^{-\kappa \dist( \cdot, \{ u = 0 \})}. \]
\end{lemm}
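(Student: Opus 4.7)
The plan is to combine a \emph{clearing-out} estimate with a maximum principle argument using exponential barriers. Normalize so that $u$ solves the Euler--Lagrange equation $-\eps^2 \Delta u + W'(u) = 0$ with $|u| \leq 1$. The clearing-out step asserts the existence of $R_0, \delta_0 > 0$, depending only on $(N, g_N)$ and $W$, such that $\dist(x, \{u = 0\}) \geq R_0 \eps$ implies $|u(x)| \geq 1 - \delta_0$. I would prove this by contradiction: given a sequence $(u_i, \eps_i, x_i)$ violating any choice of $R_0, \delta_0$, rescale via $\tilde u_i(y) := u_i(\exp_{x_i}(\eps_i y))$ on balls of radius tending to infinity in the nearly-Euclidean metric $\eps_i^{-2} \exp_{x_i}^* g_N$. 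Elliptic estimates produce a $C^2_{\loc}$ subsequential limit $\tilde u_\infty : \RR^n \to [-1, 1]$ that solves the Euclidean Allen--Cahn equation $-\Delta v + W'(v) = 0$, has no zeros, and satisfies $|\tilde u_\infty(0)| \leq 1 - \delta_0$. A Liouville-type argument --- e.g.\ via the Modica gradient estimate $|\nabla \tilde u_\infty|^2 \leq 2 W(\tilde u_\infty)$ combined with integration along the gradient flow of $\tilde u_\infty$ --- rules out such a $\tilde u_\infty$, since any zero-free bounded entire solution would force $|\nabla \tilde u_\infty|$ to blow up or $\tilde u_\infty \equiv \pm 1$.

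With clearing-out in hand, in the region $\{u \geq 1 - \delta_0\}$ set $\phi := 1 - u \geq 0$. By $W''(1) > 0$ and a sufficiently small choice of $\delta_0$, there is $\mu > 0$ with $-W'(u) \geq \mu (1-u)$ on $[1 - \delta_0, 1]$, so the equation becomes the linear differential inequality
\[ -\eps^2 \Delta \phi + \mu \phi \leq 0. \]
As barrier I would take $\psi(x) := 2 \exp\bigl(-\kappa (\tilde d(x) - R_0 \eps)/\eps\bigr)$, where $\tilde d$ is a $C^2$ smoothing of $d := \dist(\cdot, \{u = 0\})$ with $|\Delta \tilde d|$ bounded purely in terms of the geometry of $(N, g_N)$, and $\kappa > 0$ is chosen with $\kappa^2 < \mu$ so that the geometric term $\eps^2 \Delta \tilde d = O(\eps^2)$ is comfortably absorbed, yielding $-\eps^2 \Delta \psi + \mu \psi \geq 0$. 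On the inner boundary $\{u = 1 - \delta_0\}$, the contrapositive of clearing-out gives $d < R_0 \eps$, so $\psi > 2 \geq \delta_0 = \phi$. The maximum principle then yields $\phi \leq \psi$ throughout, i.e.\ $1 - u \leq 2 e^{R_0 \kappa} e^{-\kappa d/\eps}$. The region $\{u \leq -1 + \delta_0\}$ is handled symmetrically via $u \mapsto -u$ and the evenness of $W$, and the identity $1 - u^2 = (1-u)(1+u) \leq 2 \min(1-u, 1+u)$ gives the stated estimate.

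I expect the clearing-out step to be the main obstacle. A full Liouville-type classification of bounded entire Allen--Cahn solutions is delicate in high dimensions, but only the weaker statement "no zero in a large ball forces $\tilde u_\infty(0)$ close to $\pm 1$" is required here, and this follows from the Modica inequality plus a standard flowline argument. A secondary issue is that the lemma as quoted suppresses the $\eps$-dependence of the decay rate: the proof above produces an estimate of the form $|u^2 - 1| \leq C e^{-\kappa d/\eps}$ with $C, \kappa$ depending only on $(N, g_N)$ and $W$, which is the standard Allen--Cahn blow-up scaling and suffices for the application in the main theorem, where $\eps_i \to 0$ and $d$ is uniformly bounded below away from $\Sigma$.
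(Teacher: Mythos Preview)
The paper does not prove this lemma; it cites \cite[Proposition 2.2]{Gui} and remarks that the argument there (stated for $\RR^2$) is local and transfers to closed manifolds. Your two-step outline---clearing-out, then linearization near $\pm 1$ and comparison with an exponential barrier---is exactly the standard argument behind that citation, so there is nothing substantive to compare.

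Two comments. First, your closing observation is correct and worth making explicit: as printed, with no $\eps$ in the exponent and $C,\kappa$ independent of $\eps$, the inequality is vacuous on a closed manifold (take $C=e^{\kappa\,\diam(N)}$). The intended estimate is $|u^2-1|\le C\,e^{-\kappa\,\dist(\cdot,\{u=0\})/\eps}$, and this is what the applications in the body of the paper actually require, since one needs $|u_i|\to 1$ locally uniformly off $\Sigma$ as $\eps_i\to 0$. Second, your clearing-out via blow-up plus a Liouville theorem is heavier than necessary, and the step ``zero-free bounded entire solution $\Rightarrow$ identically $\pm 1$'' is not as immediate from the Modica inequality and a gradient-flowline argument as you suggest (the gradient may vanish, and in dimensions $n\ge 3$ positive superharmonic functions need not be constant). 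A shorter route, already available from the paper's own toolkit, is to skip the blow-up entirely: build a positive radial solution $v_R$ on the Euclidean ball $B_R$ with $v_R|_{\partial B_R}=0$ and $v_R(0)\to 1$ as $R\to\infty$ (by minimization, as in Lemma~\ref{lemm:model}), transplant it to geodesic balls of radius $R\eps$ in $N$, and apply Lemma~\ref{lemm:gmn} to obtain $u(x_0)\ge v_R(0)$ whenever $u>0$ on $B_{R\eps}(x_0)$.
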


The statement of \cite[Proposition 2.2]{Gui} in the reference is for $\mathbf{R}^2$ but the proof is local and adapts in the obvious way to any closed Riemannian manifold.

\begin{lemm}[{\cite[Corollary 7.4]{GuaracoMarquesNeves}}] \label{lemm:gmn}
	Suppose $u, v$ are double-well phase transitions with parameter $\eps > 0$ on a compact connected domain $\Omega$. If $u > 0$ on $\Omega$ and $v = 0$ on $\partial \Omega$, then $u > v$.
\end{lemm}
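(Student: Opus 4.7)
The plan is a scaling-comparison argument exploiting condition (4) of Definition \ref{defi:double.well}---that $W'(x)/x$ is strictly increasing on $(0,1)$. If $v \leq 0$ throughout $\Omega$, the conclusion $u > 0 \geq v$ is immediate, so I can assume $v > 0$ somewhere and define
\[
\tau^* := \sup_{\Omega} \frac{v}{u},
\]
which is finite and positive using $u > 0$ on $\Omega$ and $v \leq 1$. Since $v = 0$ on $\partial \Omega$ while $u$ is positive in a one-sided neighborhood (as in the intended applications, where $u$ plays the role of a strictly positive model barrier on $\overline{\Omega}$), the ratio $v/u$ tends to $0$ on approach to $\partial \Omega$, so the supremum is attained at some interior point $p \in \Omega$.

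The crux is to show $\tau^* \leq 1$, which I would argue by contradiction. If $\tau^* > 1$, then $v(p) = \tau^* u(p) > u(p) > 0$, and $v(p) \leq 1$ places both $u(p)$ and $v(p)$ in $(0, 1]$. Since $\tau^* u - v \geq 0$ attains its interior minimum $0$ at $p$, the second derivative test gives $\tau^* \Delta u(p) \geq \Delta v(p)$; substituting the Euler--Lagrange equations $\eps^2 \Delta u = W'(u)$ and $\eps^2 \Delta v = W'(v)$ and dividing by $\tau^* u(p) > 0$ rewrites this as
\[
\frac{W'(u(p))}{u(p)} \geq \frac{W'(v(p))}{v(p)}.
\]
But condition (4), combined with $v(p) > u(p)$ and both in $(0,1]$, delivers the strict reverse inequality---a contradiction. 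The borderline case $v(p) = 1$ also fails: there $W'(1)/1 = 0$, whereas $W'(x)/x < 0$ on $(0,1)$ (using $W'(0) = W'(1) = 0$ and the monotonicity from (4)).

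Having $\tau^* \leq 1$, i.e., $v \leq u$ on $\Omega$, I would upgrade to strict $u > v$ by applying the strong maximum principle to $w := u - v \geq 0$, which solves the linear elliptic equation $\eps^2 \Delta w = c(x)\, w$ with bounded coefficient $c(x) := \int_0^1 W''(tu + (1-t)v)\, dt$; the alternative $w \equiv 0$ is incompatible with $u > 0 = v$ near $\partial \Omega$. The main obstacle is verifying that the supremum $\tau^*$ is attained at an interior point---this is seamless when $u$ stays positive on $\overline{\Omega}$ (as in the intended applications, thanks to the model solutions of Lemma \ref{lemm:model}), but would require Hopf-type boundary estimates if $u$ were also permitted to vanish on $\partial \Omega$.
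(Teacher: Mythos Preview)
Your argument is correct and is precisely the method of the cited reference: the paper does not supply its own proof of this lemma but simply quotes \cite[Corollary~7.4]{GuaracoMarquesNeves}, remarking afterward that only the properties in Definition~\ref{defi:double.well} (in particular condition~(4), via \cite[Lemma~7.3]{GuaracoMarquesNeves}) are used. Your scaling-comparison with $\tau^* = \sup_\Omega v/u$ and the monotonicity of $W'(x)/x$ is exactly that proof.

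One clarification: your hedging in the last paragraph is unnecessary. The hypothesis is that $\Omega$ is \emph{compact} and $u > 0$ on $\Omega$, so $u$ is automatically bounded below by a positive constant on all of $\Omega$, including $\partial\Omega$. Since $v = 0$ on $\partial\Omega$, the ratio $v/u$ vanishes there, and (when $\tau^* > 0$) the supremum is attained at an interior point with no need for Hopf-type estimates. In the paper's applications $u$ is indeed the strictly positive model barrier on the relevant compact region, but this positivity on the closure is already built into the lemma's hypotheses rather than being an additional feature of the applications.
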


The statement of \cite[Corollary 7.4]{GuaracoMarquesNeves} in the reference also asks that $\partial \Omega$ be smooth, but this is not used in the proof. The reference also assumed $W(x) = \tfrac14 (1-x^2)^2$, but only the properties assumed in Definition \ref{defi:double.well} were needed in the proof (see \cite[Lemma 7.3]{GuaracoMarquesNeves}).

\textbf{Acknowledgments}. The author was partially supported by NSF DMS-2147521 and thanks Otis Chodosh and Alessandro Pigati for stimulating conversations. The author also thanks the anonymous referee for many helpful comments.

\bibliography{main} 
\bibliographystyle{amsalpha}

\end{document}